\spnewtheorem*{proposition*}{Proposition}{}{\itshape}
\spnewtheorem*{theorem*}{Theorem}{}{\itshape}
\spnewtheorem*{lemma*}{Lemma}{}{\itshape}
\begin{document}

\title{Characterizing subclasses of cover-incomparability graphs by forbidden subposets\thanks{
The final version of this preprint was published in \emph{Order}. The paper is available on
\url{https://doi.org/10.1007/s11083-018-9470-7}
}
}

\author{Jan Bok \inst{1} \and Jana Maxov\'{a} \inst{2}}

\institute{Computer Science Institute of Charles University,
Malostransk\'{e} n\'{a}m. 25, 118~00~~Praha~1, Czech Republic. \\
\email{bok@iuuk.mff.cuni.cz}
\and
Department of Mathematics, Institute of Chemistry and Technology, Prague,
Technick\'{a} 5, 166~28~~Praha~6, Czech Republic. \\
\email{maxovaj@vscht.cz}
}

\date{}

\maketitle

\begin{abstract}
In this paper we continue investigations of cover-incomparability graphs of
finite partially ordered sets (see \cite{Bres,Bres2,Bres3,Bres4} and \cite{Max,MaxDH}). We consider in some
detail the distinction between cover-preserving subsets and isometric subsets
of a partially ordered set. This is critical to understanding why forbidden
subposet characterizations of certain classes of cover-incomparability graphs
in \cite{Bres} and \cite{Bres3} are not valid as presented. Here we provide examples,
investigate the root of the difficulties, and formulate and prove valid
revisions of these characterizations.
\end{abstract}

\section{Introduction}
\label{s:intro}

In this paper we deal with posets and graphs associated to them. There are
several ways how to associate a graph $G$ to a given poset $P$. The vertex set
$V(G)$ is usually the set of points of $P$. Depending on the edge-set $E(G)$,
we may obtain among others the {\it comparability graph} of P ($x$ and $y$ are
adjacent iff $x < y$ or $y < x$), the {\it incomparability graph} of P ($x$
and $y$ are adjacent iff $x$ and $y$ are incomparable), the {\it cover graph}
of P ($x$ and $y$ are adjacent iff $x$ covers $y$ or vice versa) or the {\it
cover-incomparability graph} of P ($x$ and $y$ are adjacent iff $x$ covers
$y$, or $y$ covers $x$, or $x$ and $y$ are incomparable). The incomparability
graph of $P$ is of course just the complement of its comparability graph,
while the cover-incomparability graph of $P$ is the union of the cover graph
and the incomparability graph of $G$.

Cover graphs, comparability graphs and incomparability graphs are standard
ways how  to associate a graph to a given poset, while the
notion of cover-incom\-pa\-ra\-bility graph is new. It was introduced in \cite{Bres}. This notion
was motivated by the theory of transit functions on posets. It turns out that
the underlying graph $G_{P}$ of the standard transit function $T_P$ on the
poset $P$ is exactly the cover-incomparability graph of $P$ (see \cite{Bres}
for details).

Cover-incomparability graphs have been so far approached in two different ways.
One possibility is to try to characterize graphs that are cover-incomparability graphs. In \cite{Max} it was proved that the recognition
problem for cover-incomparability graphs is in general NP-complete. On the
other hand there are classes of graphs (such as trees, Ptolemaic graphs,
distance-hereditary graphs, block graphs, split graphs or $k$-trees) for which
the recognition problem can be solved in linear time
(see \cite{Bres2,Bres3,MaxDH,Maxktrees} for details and proofs). 

Another approach is to study posets whose cover-incomparability graphs have
certain property. Posets whose cover-incomparability graphs are chordal,
Ptolemaic, distance-hereditary, claw-free or cographs were characterized in
\cite{Bres} and \cite{Bres4}. Unfortunately, there is a mistake that
originated in \cite{Bres} and continued in \cite{Bres4} and several statements
from these papers do not hold as they are stated. In this paper we correct the
mistake and reformulate the corresponding statements so that they hold.

Our paper is organized as follows.
In Section \ref{s:terminology} we give an overview of terminology and basic
properties of cover-incomparability graphs. In Section~\ref{s:counterexamples}
we present counterexamples to Theorem~4.1 from~\cite{Bres3}, Lemma~4.4 and 4.5
from~\cite{Bres} and to Proposition~5.1 from~\cite{Bres}. In
Section~\ref{s:oprava} we show that the mistake originated in Theorem~2.4 of
\cite{Bres}. We reformulate this statement and give a corrected proof of it.
In addition, we reformulate all the above mentioned statements so that they
hold.

\section{Terminology and basic properties}
\label{s:terminology}

Let $P=(V, \leq)$ be a poset. We will use the following notation. For $u,v \in V$ we write:
\begin{itemize}
\item $u<v$ if $u\leq v$ and $u \neq v$.
\item $u\prec v$ if $u<v$ and there is no $z \in V$ such that $u<z<v$. We say that $v$ {\it covers} $u$.
\item $u\prec\prec v$ if       $u<v$ and  $\neg (u \prec v)$.
\item $u \parallel v$ if $u$ and $v$ are incomparable.
\end{itemize}

\begin{definition}
  For a given poset $P=(V, \leq)$, let $G_P=(V,E)$ be a graph with $E=\{ \{u,v\} \ |\ u\prec v \ {\rm or}\ v\prec u \ {\rm or} \ u\parallel v \}$. Then we say that $G_P$ is the \emph{cover-incomparability graph} of $P$ (or the C-I graph of $P$ for short).
\end{definition}

Note that for any $u,v \in V(G_P)$, $u \neq v$ we have  $\{u,v\}
\notin E(G_P) \Leftrightarrow u\prec\prec v \ {\rm or} \
v\prec\prec u.$

As this is crucial for the rest of our paper let us define properly the following three concepts.

\begin{definition}
 Let  $P=(V_P, \leq_P)$ be a poset.
   \begin{itemize}
   \renewcommand{\labelitemi}{$\bullet$}
   \item We say that $Q=(V_Q, \leq_Q)$ is a \emph{subposet} of $P=(V_P, \leq_P)$ if
   \begin{enumerate}
   \item $V_Q \subseteq V_P$ and
   \item for any $u,v \in V_Q$ we have $u \leq_Q v \Leftrightarrow u \leq_P v$.
   \end{enumerate}
\vspace{0.1cm}
\item We say that $R=(V_R, \leq _R)$ is an \emph{isometric subposet} of $P=(V_P, \leq_P)$ if
   \begin{enumerate}
 \item $V_R \subseteq V_P$ and
  \item for any $u,v \in V_R$ we have $u \leq_R v \Leftrightarrow u \leq_P v$ and
   \item for any $u,v \in V_R$ such that $u \leq_R v$ there exists a chain of a shortest length between $u$ and $v$ in $P$ is also in $R$.
   \end{enumerate}
\vspace{0.1cm}
   \item We say that $S=(V_S, \leq_S)$ is a \emph{$\prec$-preserving subposet} of $P=(V_P, \leq_P)$ if
   \begin{enumerate}
    \item $V_S \subseteq V_P$ and
  \item for any $u,v \in V_S$ we have $u \leq_S v \Leftrightarrow u \leq_P v$ and
   \item for any $u,v \in V_S$ we have $u \prec_S v \Leftrightarrow u \prec_P v$.
   \end{enumerate}
   \end{itemize}
\end{definition}

Note that an isometric subposet is always $\prec$-preserving but there are
$\prec$-preserving subposets that are not isometric. For example, the poset $P'$
depicted in Fig.~\ref{f:nonisometric} is a nonisometric $\prec$-preserving
subposet of $P$ in Fig.~\ref{f:nonisometric}.

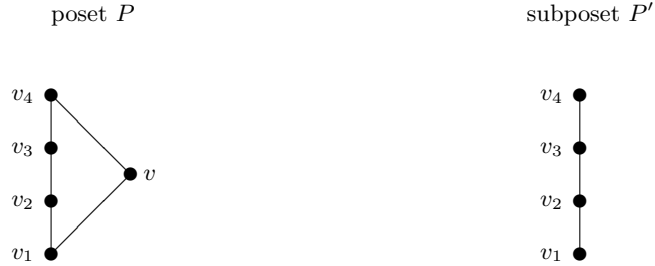
\begin{figure}[h]
\begin{center}
\begin{picture}(300,120)
\put(50,10){\circle*{5}}
\put(50,30){\circle*{5}}
\put(50,50){\circle*{5}}
\put(50,70){\circle*{5}}
\put(80,40){\circle*{5}}
\put(50,10){\line(0,1) {60}}
\put(50,10){\line(1,1) {30}}
\put(50,70){\line(1,-1) {30}}
\put(35,8) {$v_1$}
\put(35,28) {$v_2$}
\put(35,48) {$v_3$}
\put(35,68) {$v_4$}
\put(85,38) {$v$}
\put(50,98) {poset $P$}

\put(230,98) {subposet $P'$}
\put(250,10){\circle*{5}}
\put(250,30){\circle*{5}}
\put(250,50){\circle*{5}}
\put(250,70){\circle*{5}}
\put(250,10){\line(0,1) {60}}
\put(235,8) {$v_1$}
\put(235,28) {$v_2$}
\put(235,48) {$v_3$}
\put(235,68) {$v_4$}
\end{picture}
\caption{A nonisometric $\prec$-preserving subposet.}
\label{f:nonisometric}
\end{center}
\end{figure}

Let us also mention a few easy observations about C-I graphs. They follow
immediately from the definition.

\begin{lemma}
\label{l:basic} Let $P=(V, \leq)$ be a poset and $G_P=(V,E)$ its
C-I graph. Then the following holds.
\begin{enumerate}[(i)]
\item $G_P$ is connected.
\item If $U \subseteq V$ is an antichain in $P$ then $U$ induces a complete subgraph in $G_P$.
\item If $I \subseteq V$ is an independent set in $G_P$ then all points of $I$ lie on a common chain in $P$.
\item There are at most $2$ vertices of degree $1$ in $G_P$.
\item If $P^* =(V, \leq^*)$ is the dual poset to $P$ (i.e. $u \leq v$ in P
$\Leftrightarrow v \leq^*u$ in $P^*$), then $G(P^*) = G_P$.
\item If the vertices $x,y,z$ form a triangle in $G_P$ then at least two of them are incomparable.
\item Let $x,y,z$ be vertices of $G_P$ such that $xy \in E$,  $xz \notin E$, $yz \notin E$.
Then $(x \prec\prec \  z \ {\rm and} \ y \prec\prec \  z)$ or $(z \prec\prec \  x \ {\rm and}\  z \prec\prec \  y)$.
\end{enumerate}
\end{lemma}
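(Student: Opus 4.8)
The plan is to treat the seven items separately, leaning throughout on the reformulation stated just before the lemma: $\{u,v\}\notin E(G_P)$ exactly when $u\prec\prec v$ or $v\prec\prec u$. Items (ii), (iii), (v), (vi) and (vii) will fall out almost directly from this dichotomy, so I would dispatch them first and reserve the real work for (i) and especially (iv).

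For (i) I would use that in a finite poset any two comparable elements are joined by a saturated (cover-by-cover) chain; since every covering relation is an edge of $G_P$ and every incomparable pair is an edge directly, such a chain is a walk in $G_P$, so every pair of vertices is joined and $G_P$ is connected. Item (ii) is immediate, an antichain being a set of pairwise incomparable, hence pairwise adjacent, vertices; (iii) is its mirror, an independent set being pairwise $\prec\prec$-comparable, hence pairwise comparable, i.e. a chain. For (v), note $u\prec v$ in $P$ iff $v\prec u$ in $P^*$ while incomparability is self-dual, so the edge relations of $G_P$ and $G_{P^*}$ coincide verbatim. For (vi), three pairwise comparable vertices form a chain $a<b<c$, whence $a\prec\prec c$ and $\{a,c\}\notin E$, so they cannot span a triangle; this is the contrapositive. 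For (vii), I would expand the two given non-edges into the four sign-combinations of $\prec\prec$: the two ``mixed'' cases force $x<z<y$ or $y<z<x$, hence $x\prec\prec y$ (resp. $y\prec\prec x$) and therefore $\{x,y\}\notin E$, contradicting $xy\in E$, which leaves exactly the two stated alternatives.

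The main obstacle is (iv). I would first observe that a vertex that is neither minimal nor maximal in $P$ covers some element and is covered by another, producing two distinct neighbours; hence every degree-$1$ vertex lies in $\mathrm{Min}(P)\cup\mathrm{Max}(P)$. Next I would invoke connectivity from (i): if two degree-$1$ vertices were adjacent they would form an isolated edge, impossible once $|V|\ge 3$, so the set $D$ of degree-$1$ vertices is independent, and by (iii) it lies on a common chain $C$. Finally, a chain contains at most one minimal and at most one maximal element of $P$ (two comparable minimal elements violate minimality, dually for maximal), and no degree-$1$ vertex is simultaneously minimal and maximal, since such an isolated point of $P$ would be adjacent to every other vertex. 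Combining these gives $|D|\le 2$. I expect the bookkeeping in this last step — ruling out the extremal coincidences and separately clearing the degenerate cases $|V|\le 2$ — to be where care is needed, whereas the conceptual content is simply that degree $1$ forces a vertex to be extremal, that $D$ is independent, and that $D$ is confined to a single chain.
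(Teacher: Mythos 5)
Your proof is correct; the paper itself gives no argument for this lemma (it is stated as following ``immediately from the definition,'' being quoted from the earlier literature), and your item-by-item verification, including the careful treatment of (iv) via extremality, independence of the degree-one set, and Lemma~\ref{l:basic}(iii), supplies exactly the standard details the authors intend.
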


\section{Counterexamples} \label{s:counterexamples}

In this section we present counterexamples to several statements from \cite{Bres} and \cite{Bres3}. Let us start with the easiest case, with Proposition 5.1 from \cite{Bres}.

{\bf\subsection{A counterexample to Proposition 5.1 from \cite{Bres} }}
First we cite the statement of this proposition in the original text:

\begin{proposition*}[Proposition~5.1~\cite{Bres}]
Let $P$ be a poset. Then $G_P$ contains an induced claw if and only if $P$ contains one of $S_1$, $S_2$ or $S_3$ as an isometric subposet, see Fig.~\ref{f:posetyS}.
\end{proposition*}

\begin{figure}
\begin{center}
\includegraphics[width=0.85\textwidth]{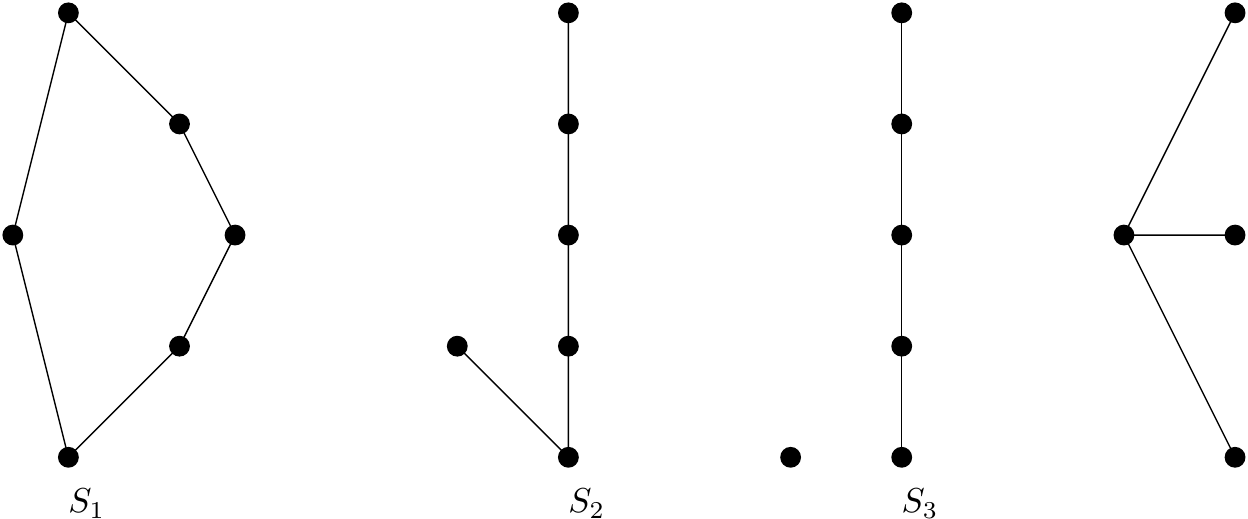}
\caption{Subposets $S_1$, $S_2$ and $S_3$ and the claw.}
\label{f:posetyS}
\end{center}
\end{figure}

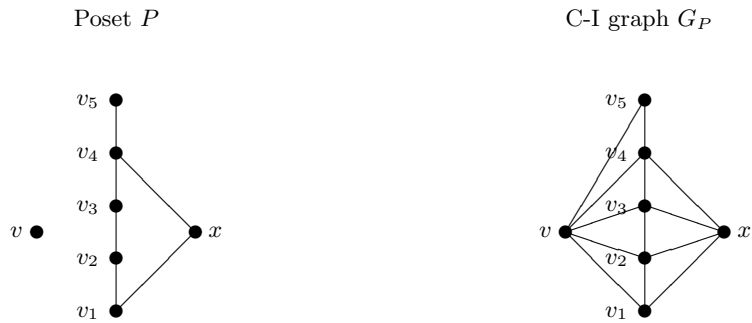
\begin{figure}
\begin{center}
\begin{picture}(300,120)
\put(50,10){\circle*{5}}
\put(50,30){\circle*{5}}
\put(50,50){\circle*{5}}
\put(50,70){\circle*{5}}
\put(50,90){\circle*{5}}
\put(20,40){\circle*{5}}
\put(80,40){\circle*{5}}
\put(50,10){\line(0,1) {80}}
\put(50,10){\line(1,1) {30}}
\put(50,70){\line(1,-1) {30}}
\put(35,8) {$v_1$}
\put(35,28) {$v_2$}
\put(35,48) {$v_3$}
\put(35,68) {$v_4$}
\put(35,88) {$v_5$}
\put(85,38) {$x$}
\put(10,38) {$v$}
\put(34,118) {Poset $P$}

\put(250,10){\circle*{5}}
\put(250,30){\circle*{5}}
\put(250,50){\circle*{5}}
\put(250,70){\circle*{5}}
\put(250,90){\circle*{5}}
\put(220,40){\circle*{5}}
\put(280,40){\circle*{5}}
\put(250,10){\line(0,1) {80}}
\put(250,10){\line(1,1) {30}}
\put(250,30){\line(3,1) {30}}
\put(250,50){\line(3,-1) {30}}
\put(250,70){\line(1,-1) {30}}
\put(220,40){\line(1,1) {30}}
\put(220,40){\line(1,-1) {30}}
\put(220,40){\line(3,1) {30}}
\put(220,40){\line(3,-1) {30}}
\put(220,40){\line(3,5) {30}}

\put(235,8) {$v_1$}
\put(235,28) {$v_2$}
\put(235,48) {$v_3$}
\put(235,68) {$v_4$}
\put(235,88) {$v_5$}
\put(285,38) {$x$}
\put(210,38) {$v$}
\put(220,118) {C-I graph $G_P$}
\end{picture}
\caption{A counterexample to Proposition 5.1.}
\label{f:protipriklad1}
\end{center}
\end{figure}

This statement does not hold. Let $P$ be the poset depicted in
Fig.~\ref{f:protipriklad1}. Clearly, neither $S_1$ nor $S_2$ are subposets of
$P$. $S_3$ is a subposet of $P$ but it is \emph{not an isometric subposet} of
$P$. This is because there is a chain of length two between $u$ and $v$ in $P$
while there is no chain of length two between $u$ and $v$ in $S_3$. Thus $P$
does not contain any of $S_1$, $S_2$ and $S_3$ as an isometric subposet. But
$G_P$ contains an induced claw on vertices $v,v_1,v_3,v_5$, a contradiction.

Counterexamples to other statements can be derived in a similar way:

\subsection{A counterexample to Lemma~4.4 from \cite{Bres}}

\begin{proposition*}[Proposition 4.4 in \cite{Bres}]
Let $P$ be a poset. Then $G_P$ contains an induced house if and only if $P$ contains one of $R_1$, $R_2$, $R_3$, $R_4$ or $R_5$ as an isometric subposet, see Figure~\ref{f:posetyR}.
\end{proposition*}

\begin{figure}
\begin{center}
\includegraphics[width=0.90\textwidth]{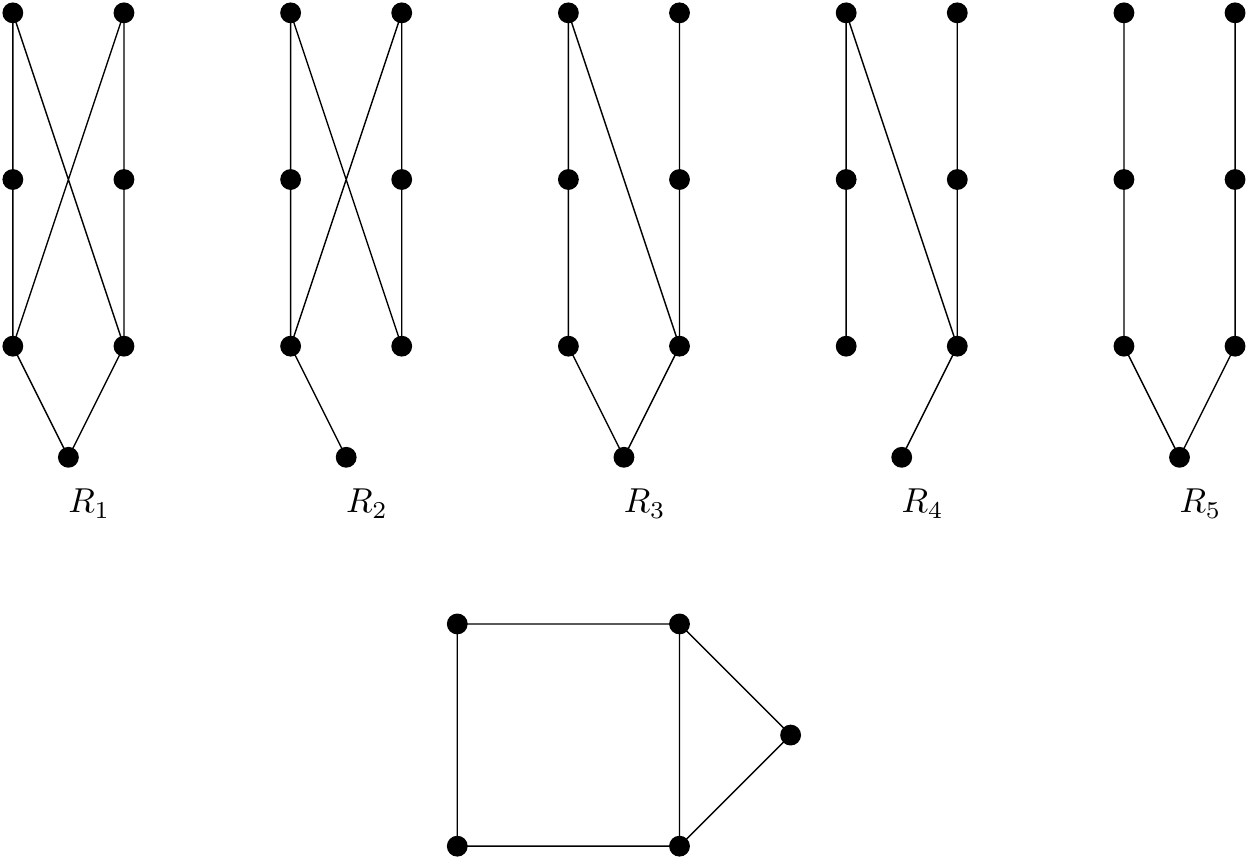}
\caption{Subposets $R_i$, $i=1, \dots 5$ and the house.}
\label{f:posetyR}
\end{center}
\end{figure}

\begin{figure}[h]
\begin{center}
\begin{picture}(450,90)
\put(50,10){\circle*{5}}
\put(30,30){\circle*{5}}
\put(70,30){\circle*{5}}
\put(30,50){\circle*{5}}
\put(70,50){\circle*{5}}
\put(30,70){\circle*{5}}
\put(70,70){\circle*{5}}
\put(30,30){\circle*{5}}
\put(100,40){\circle*{5}}
\put(100,40){\line(-1,1) {30}}
\put(100,40){\line(-5,-3) {50}}
\put(70,30){\line(0,1) {40}}
\put(30,30){\line(0,1) {40}}
\put(50,10){\line(1,1) {20}}
\put(50,10){\line(-1,1) {20}}

\put(35,8) {$v_1$}
\put(15,28) {$v_2$}
\put(15,48) {$v_3$}
\put(15,68) {$v_4$}
\put(56,28) {$v_5$}
\put(56,48) {$v_6$}
\put(56,68) {$v_7$}
\put(105,38) {$x$}

\put(45,90) {Poset $P$}
\put(210,90) {Induced house in $G_P$}

\put(300,10){\circle*{5}}
\put(280,30){\circle*{5}}
\put(320,30){\circle*{5}}
\put(280,70){\circle*{5}}
\put(320,70){\circle*{5}}

\put(320,30){\line(0,1) {40}}
\put(280,30){\line(0,1) {40}}
\put(300,10){\line(1,1) {20}}
\put(300,10){\line(-1,1) {20}}
\put(280,30){\line(1,0) {40}}
\put(280,70){\line(1,0) {40}}

\put(285,8) {$v_1$}
\put(265,28) {$v_2$}
\put(265,68) {$v_7$}
\put(326,28) {$v_5$}
\put(326,68) {$v_4$}

\put(200,10){\circle*{5}}
\put(180,30){\circle*{5}}
\put(220,30){\circle*{5}}
\put(180,70){\circle*{5}}
\put(220,70){\circle*{5}}

\put(250,45){$=$}

\put(200,10){\line(-1,1) {20}}
\put(200,10){\line(1,1) {20}}
\put(180,30){\line(1,1) {40}}
\put(180,70){\line(1,-1) {40}}
\put(180,30){\line(1,0) {40}}
\put(180,70){\line(1,0) {40}}

\put(185,8) {$v_1$}
\put(165,28) {$v_2$}
\put(165,68) {$v_4$}
\put(226,28) {$v_5$}
\put(226,68) {$v_7$}
\end{picture}
\end{center}
\caption{A counterexample to Lemma 4.4}
\label{f:protipriklad2}
\end{figure}
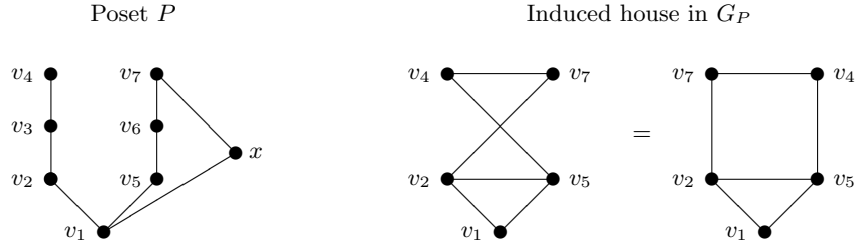

Let $P$ be the poset depicted in Fig.~\ref{f:protipriklad2}. It is easy to see
that it is a counterexample to Lemma 4.4 \cite{Bres}. Indeed, $P$ does not
contain any of the posets $R_1$, $R_2$, $R_3$, $R_4$ or $R_5$ as an {\bf
isometric subposet}. But $G_P$ contains an induced house on vertices $v_1,v_2,
v_4,v_5, v_7$, a contradiction.

{\bf\subsection{A counterexample to Lemma 4.5 from \cite{Bres} }}

\begin{proposition*}[Proposition 4.5 in \cite{Bres}]
Let $P$ be a poset. Then $G_P$ contains an induced domino if and only if $P$ contains one of $D_1$, $D_2$, $D_3$, $D_4$, $D_5$, $D_6$ or $D_7$ as an isometric subposet, see Fig.~\ref{f:posetyD}.
\end{proposition*}

\begin{figure}[h]
\begin{center}
\includegraphics[width=0.80\textwidth]{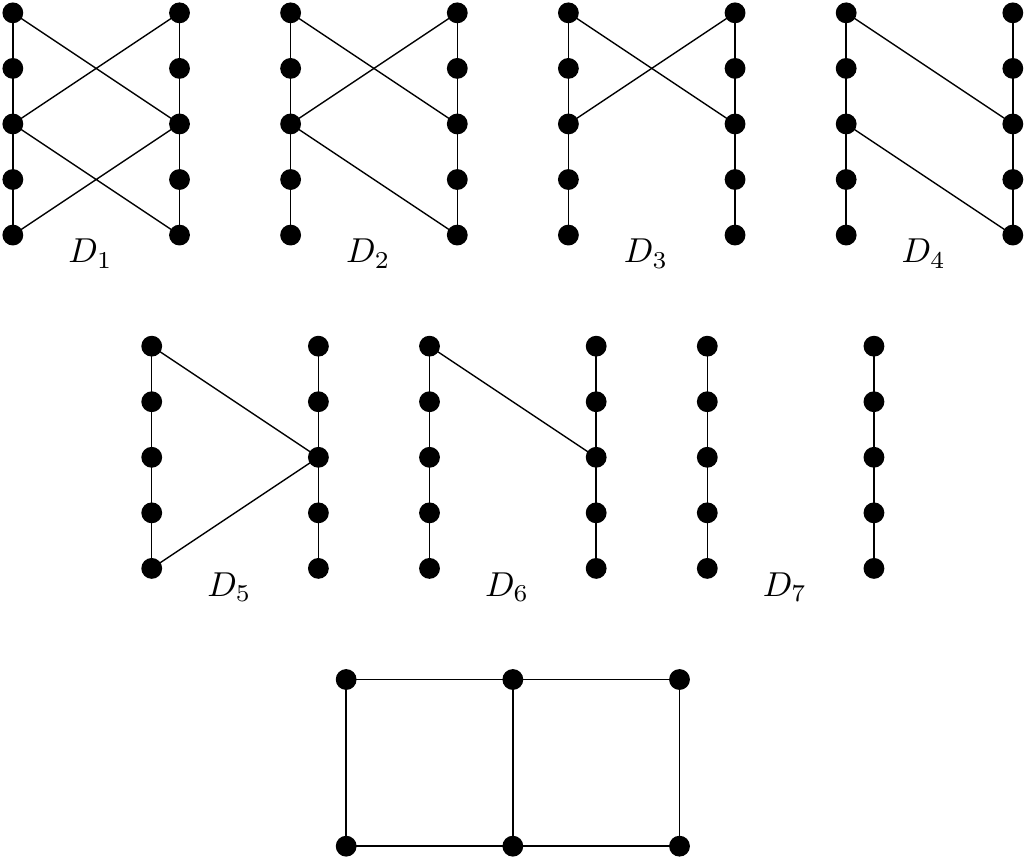}
\caption{Subposets $D_i$, $i=1, \dots 7$, and the domino graph.}
\label{f:posetyD}
\end{center}
\end{figure}

\begin{figure}[h]
\begin{center}
\vspace{0.3cm}
\begin{picture}(300,100)
\put(50,10){\circle*{5}}
\put(50,30){\circle*{5}}
\put(50,50){\circle*{5}}
\put(50,70){\circle*{5}}
\put(50,90){\circle*{5}}

\put(20,10){\circle*{5}}
\put(20,30){\circle*{5}}
\put(20,50){\circle*{5}}
\put(20,70){\circle*{5}}
\put(20,90){\circle*{5}}

\put(70,50){\circle*{5}}
\put(70,50){\line(-1,2) {20}}
\put(70,50){\line(-1,-2) {20}}

\put(50,10){\line(0,1) {80}}
\put(20,10){\line(0,1) {80}}

\put(5,8) {$v_1$}
\put(5,28) {$v_2$}
\put(5,48) {$v_3$}
\put(5,68) {$v_4$}
\put(5,88) {$v_5$}

\put(36,8) {$v_6$}
\put(36,28) {$v_7$}
\put(36,48) {$v_8$}
\put(36,68) {$v_9$}
\put(34,87) {$v_{10}$}

\put(78,48) {$x$}

\put(250,10){\circle*{5}}
\put(250,50){\circle*{5}}
\put(250,90){\circle*{5}}
\put(220,10){\circle*{5}}
\put(220,50){\circle*{5}}
\put(220,90){\circle*{5}}
\put(250,10){\line(0,1) {80}}
\put(220,10){\line(0,1) {80}}
\put(220,10){\line(1,0) {30}}
\put(220,50){\line(1,0) {30}}
\put(220,90){\line(1,0) {30}}

\put(205,8) {$v_1$}
\put(205,48) {$v_8$}
\put(205,88) {$v_5$}

\put(256,8) {$v_6$}
\put(256,48) {$v_3$}
\put(256,88) {$v_{10}$}

\put(200,110) {Induced domino in $G_P$}
\put(25,110) {Poset $P$}
\end{picture}
\caption{A counterexample to Lemma 4.5.}
\label{f:protipriklad3}
\end{center}
\end{figure}

Let $P$ be the poset depicted in Figure~\ref{f:protipriklad3}. Clearly
that it is a counterexample to Lemma~4.5. \cite{Bres}. Indeed, $P$ does not
contain any of the posets $D_1$, $D_2$, $D_3$, $D_4$, $D_5$, $D_6$ or $D_7$ as
an \emph{isometric subposet}. But $G_P$ contains an induced domino on vertices
$v_1,v_2, v_4,v_5, v_7$, a contradiction.

{\bf\subsection{A counterexample to Theorem 4.1 from \cite{Bres3} }}

\begin{theorem*}[Theorem 4.1~\cite{Bres3}]
Let $P$ be a poset. Then $G_P$ is a cograph if and only if $P$ contains neither any of $Q_1$, $Q_2$, \ldots, $Q_7$ nor duals of $Q_2$ or $Q_5$ as an isometric subposet, see Fig.~\ref{f:posetyQ}.
\end{theorem*}

\begin{figure}[h]
\begin{center}
\includegraphics[width=0.7\textwidth]{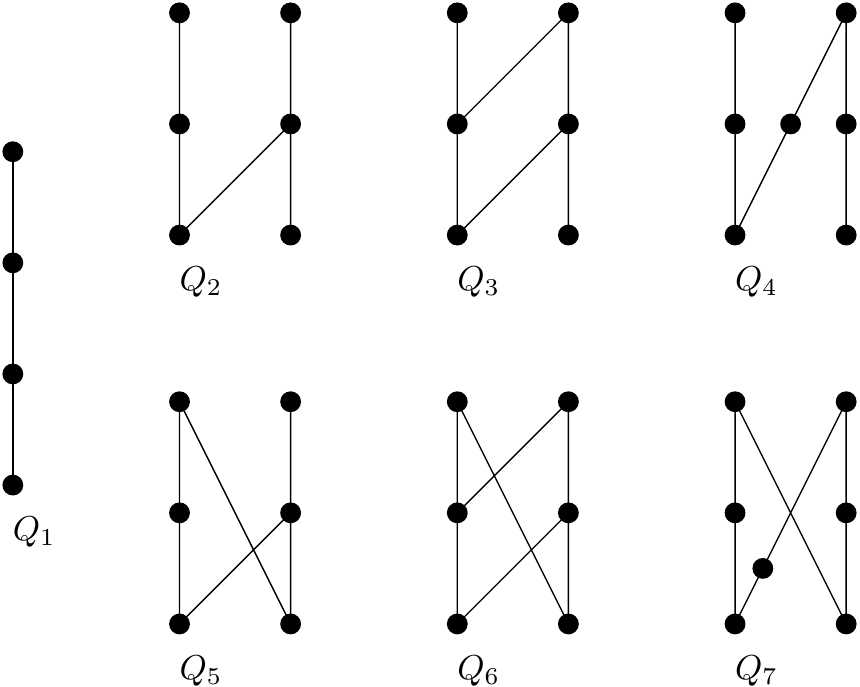}
\caption{Subposets $Q_i$, $i=1, \dots 7$.}
\label{f:posetyQ}
\end{center}
\end{figure}

\begin{figure}[h]
\begin{center}
\begin{picture}(300,100)
\put(50,10){\circle*{5}}
\put(50,30){\circle*{5}}
\put(50,50){\circle*{5}}
\put(50,70){\circle*{5}}

\put(80,40){\circle*{5}}
\put(50,10){\line(0,1) {60}}
\put(50,10){\line(1,1) {30}}
\put(50,70){\line(1,-1) {30}}
\put(35,8) {$v_1$}
\put(35,28) {$v_2$}
\put(35,48) {$v_3$}
\put(35,68) {$v_4$}

\put(85,38) {$x$}
\put(50,88) {Poset $P$}

\put(250,10){\circle*{5}}
\put(250,30){\circle*{5}}
\put(250,50){\circle*{5}}
\put(250,70){\circle*{5}}

\put(280,40){\circle*{5}}
\put(250,10){\line(0,1) {60}}
\put(250,10){\line(1,1) {30}}
\put(250,30){\line(3,1) {30}}
\put(250,50){\line(3,-1) {30}}
\put(250,70){\line(1,-1) {30}}

\put(235,8) {$v_1$}
\put(235,28) {$v_2$}
\put(235,48) {$v_3$}
\put(235,68) {$v_4$}

\put(285,38) {$x$}

\put(240,88) {C-I graph $G_P$}
\end{picture}
\caption{A counterexample to Theorem 4.1}
\label{f:protipriklad4}
\end{center}
\end{figure}
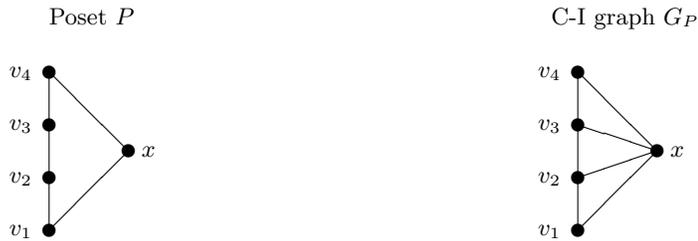

Let $P$ be the poset depicted in Fig.~\ref{f:protipriklad3}. It is easy to see
that it is a counterexample to Theorem 4.1 \cite{Bres3}. Indeed, $P$ contains
neither any of the posets $Q_1$, $Q_2$, \dots, $Q_7$ nor the duals of $Q_2$ or
$Q_5$ as an \emph{isometric subposet}. But $G_P$ contains an induced path on
four vertices $v_1,v_2, v_3,v_4$. Thus, $G_P$ is not a cograph, a
contradiction.

\section{Restatements and proofs}\label{s:oprava}

The mistake originated in Theorem~2.4~\cite{Bres}.

\begin{theorem*}[Theorem~2.4~\cite{Bres}]
Let $\cal G$ be a class of graphs with a forbidden induced subgraphs characterization. Let ${\cal P}=\{ P \  | \ P $ {\rm is a poset with} $G_P \in {\cal G} \}$. Then $\cal P$ has a forbidden isometric characterization.
\end{theorem*}

If we go carefully through the proof of this theorem in \cite{Bres} we notice
that it is not proved that the poset $P$ contains one of the constructed
posets $\{P_i\} _{i \in I}$ as an \emph{isometric} subposet. The condition of
isometry is too strong and it has to be replaced by the weaker concept of
$\prec$-preserving subposet. See Section \ref{s:terminology} for the
definition.

\begin{theorem}[see Theorem 2.4~\cite{Bres}]\label{t:opravena}
Let $\cal G$ be a class of graphs with a characterization by forbidden induced subgraphs. Let ${\cal P}=\{ P \  | \ P $ {\rm is a poset with} $G_P \in {\cal G} \}$. Then $\cal P$ has a characterization by forbidden $\prec$-preserving subposets.
\end{theorem}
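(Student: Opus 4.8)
The plan is to deduce the forbidden $\prec$-preserving characterization from the forbidden induced subgraph characterization of $\mathcal{G}$, the whole point being to replace the notion of isometric subposet, which is too strong, by the notion of $\prec$-preserving subposet, which is exactly strong enough. The crux is the following lemma, which I would prove first: if $Q$ is a $\prec$-preserving subposet of $P$ with $V(Q)=U$, then $G_Q$ is precisely the subgraph of $G_P$ induced on $U$. Its proof is short. Fix $u,v\in U$. Incomparability is automatically inherited from the subposet condition, since comparability in $Q$ and in $P$ agree; and the covering relation is inherited by clause (3) in the definition of a $\prec$-preserving subposet, so $u\prec_Q v \Leftrightarrow u\prec_P v$. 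As the edges of a C-I graph are exactly the pairs that are either covering or incomparable, the edge sets of $G_Q$ and of $G_P[U]$ coincide. I would stress that plain subposets are \emph{too weak} for this: deleting intermediate elements can turn a non-covering comparable pair into a covering one, creating an edge of $G_{P[U]}$ that is absent from $G_P[U]$, so that the C-I graph of a plain subposet need not be an induced subgraph of $G_P$ at all.

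Next I would fix a family $\mathcal{F}$ of forbidden induced subgraphs for $\mathcal{G}$ and define the candidate forbidden family
\[ \mathcal{Q} = \{\, Q \mid G_Q \notin \mathcal{G}, \text{ but } G_{Q'} \in \mathcal{G} \text{ for every proper } \prec\text{-preserving subposet } Q' \text{ of } Q \,\}, \]
the $\prec$-preserving-minimal posets lying outside $\mathcal{P}$. I then claim that $P\in\mathcal{P}$ if and only if $P$ contains no member of $\mathcal{Q}$ as a $\prec$-preserving subposet. One direction is immediate from the lemma: if some $Q\in\mathcal{Q}$ is a $\prec$-preserving subposet of $P$ on vertex set $U$, then $G_P[U]=G_Q$ contains a member of $\mathcal{F}$ as an induced subgraph, whence so does $G_P$, so $G_P\notin\mathcal{G}$ and $P\notin\mathcal{P}$.

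For the converse I would argue by minimality. Suppose $P\notin\mathcal{P}$, so $G_P\notin\mathcal{G}$; then $P$ is itself a $\prec$-preserving subposet of $P$ whose C-I graph lies outside $\mathcal{G}$, and since $P$ is finite I may choose a $\prec$-preserving subposet $Q_0$ of $P$ with the fewest vertices among those with $G_{Q_0}\notin\mathcal{G}$. To see $Q_0\in\mathcal{Q}$ I first record that being a $\prec$-preserving subposet is transitive: if $Q'$ is $\prec$-preserving in $Q_0$ and $Q_0$ is $\prec$-preserving in $P$, then for $u,v\in V(Q')$ the equivalences $u\leq_{Q'} v \Leftrightarrow u\leq_{Q_0} v \Leftrightarrow u\leq_P v$ and $u\prec_{Q'} v \Leftrightarrow u\prec_{Q_0} v \Leftrightarrow u\prec_P v$ show $Q'$ is $\prec$-preserving in $P$. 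Hence every proper $\prec$-preserving subposet of $Q_0$ is a $\prec$-preserving subposet of $P$ with fewer vertices, so by the minimal choice its C-I graph lies in $\mathcal{G}$; thus $Q_0\in\mathcal{Q}$, and $Q_0$ is the required $\prec$-preserving subposet of $P$.

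The step I expect to be the real obstacle is not the bookkeeping in the two directions but pinning down the lemma and, with it, the exact strength of the subposet notion: one needs a condition on $U$ guaranteeing $G_Q=G_P[U]$ that is still inherited by sub-subposets, so that the minimal extraction closes under transitivity. The definition of $\prec$-preserving subposet is tailored to both requirements, whereas isometry---though also sufficient for the lemma---is a stronger, more delicate condition, and it is precisely the unjustified demand of isometry that breaks the original constructions exhibited in Section~\ref{s:counterexamples}.
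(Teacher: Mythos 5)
Your argument is correct, and it rests on the same key lemma as the paper (Lemma~\ref{l:opravene}: a $\prec$-preserving subposet $Q$ of $P$ satisfies $G_Q\cong G_P[V_Q]$), proved in essentially the same way. The genuine difference lies in the choice of forbidden family and hence in the converse direction. The paper forbids every poset $Q$ with $G_Q$ isomorphic to one of the forbidden induced subgraphs of $\mathcal G$, and settles the converse with the remark that it holds ``by the construction of $\{Q_i\}_{i\in I}$''; unpacking that remark requires showing that whenever $G_P$ contains a forbidden graph $F$ induced on a vertex set $U$, some (possibly different) vertex set carries a $\prec$-preserving subposet whose C-I graph is exactly $F$ --- the induced subposet on $U$ itself need not be $\prec$-preserving, which is precisely why the explicit claw characterization later in the paper requires a case analysis over shortest chains and an extra poset $S_2^*$. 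Your family of $\prec$-preserving-minimal posets outside $\mathcal P$ sidesteps this issue entirely: the converse reduces to finiteness plus the transitivity of the $\prec$-preserving relation, which you verify explicitly, and the forward direction uses only that $\mathcal G$ is closed under induced subgraphs. What you give up is an explicit description of the forbidden posets in terms of the forbidden graphs (your list is defined by minimality rather than by realizing a given $F$), so it is less convenient for extracting concrete lists such as $S_1$, $S_2$, $S_3$, $S_2^*$; what you gain is a converse with no hidden steps, arguably tighter than the one printed in the paper.
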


For the proof of this theorem we need a slightly stronger version of Lemma~2.3~\cite{Bres}.

\begin{lemma}[see Lemma 2.3~\cite{Bres}]\label{l:opravene}
Let $Q$ be a  $\vartriangleleft$-preserving subposet of a poset $P$. Then $G_Q$ is isomorphic to a subgraph of $G_P$ induced by the points of $Q$.
\end{lemma}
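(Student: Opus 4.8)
The plan is to prove the lemma by exhibiting an explicit isomorphism, namely the identity map on $V_Q$, between $G_Q$ and the subgraph of $G_P$ induced by the points of $Q$. Since both graphs have $V_Q$ as their vertex set, the identity is automatically a bijection on vertices, so it suffices to verify that for every pair of distinct points $u,v \in V_Q$ the pair $\{u,v\}$ is an edge of $G_Q$ if and only if it is an edge of $G_P$. In other words, the whole content of the statement reduces to a pointwise comparison, over $V_Q$, of the three edge-generating relations (covering in either direction, and incomparability) as computed in $Q$ versus in $P$.

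First I would recall the edge rule for C-I graphs: $\{u,v\} \in E(G_Q)$ precisely when $u \prec_Q v$, or $v \prec_Q u$, or $u \parallel_Q v$, and the analogous rule holds for $G_P$. The key step is then to translate the two relevant defining conditions of a $\prec$-preserving subposet into statements about these relations. From condition (2) of the definition, that $u \leq_Q v \Leftrightarrow u \leq_P v$, one obtains at once that comparability — and hence incomparability — agree in $Q$ and $P$, i.e.\ $u \parallel_Q v \Leftrightarrow u \parallel_P v$. From condition (3), that $u \prec_Q v \Leftrightarrow u \prec_P v$, the covering relation agrees in both directions. Combining these, each of the three disjuncts defining an edge holds in $Q$ exactly when it holds in $P$, so the two edge sets coincide on $V_Q$ and the identity is the desired isomorphism.

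The point that deserves care — and the reason the original statement had to be weakened — is to check that $\prec$-preservation captures exactly the information required, no more and no less. The danger one must rule out is a pair that is an edge in $G_Q$ but a non-edge in $G_P$, or vice versa; recall that a non-edge corresponds to a relation $u \prec\prec v$, that is $u<v$ without covering. Were $Q$ merely a subposet, a pair could satisfy $u \prec_Q v$ while some intermediate element of $P$ outside $Q$ destroys the covering, yielding $u \prec\prec_P v$ and therefore a non-edge in $G_P$; condition (3) is precisely what forbids this, and dually condition (2) prevents an incomparable pair of $Q$ from becoming comparable in $P$. I would stress that no hypothesis about shortest chains enters anywhere, which is exactly why the weaker $\prec$-preserving assumption suffices in place of the stronger isometric one invoked in the original Lemma~2.3. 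This substitution is the sole substantive difference from the original argument, and it is what makes the present statement — and hence Theorem~\ref{t:opravena}, which relies on it — correct.
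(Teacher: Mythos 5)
Your proof is correct and follows essentially the same route as the paper's: both take the identity map on the points of $Q$ and verify edge-by-edge that the three edge-generating relations (covering in either direction, and incomparability) transfer between $Q$ and $P$ via conditions (2) and (3) of the definition of a $\prec$-preserving subposet. The only cosmetic difference is that the paper splits the verification into the edge and non-edge cases (using the $\prec\prec$ characterization of non-edges), whereas you argue all three disjuncts agree as biconditionals; the content is the same.
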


\begin{proof}
Let $H$ be the subgraph of $G_P$ induced by the points of $Q$. Let $u$ and $v$ be arbitrary points in $Q$. We show that
$$ \{u,v\} \in E(H) \Leftrightarrow  \{u,v\} \in E(G_Q).$$

First suppose that $\{u,v\} \in E(H)$. This happens if and only if either $u \prec _P v$, or $v \prec _P u$, or $u \parallel _P v$. As $Q$ is a
  $\vartriangleleft$-preserving subposet of $P$ we have
\begin{alignat*}{4}
u \prec _P v &\Rightarrow\ &u \prec _Q v &\Rightarrow \{u,v\} \in E(G_Q),\\
v \prec _P u &\Rightarrow\ &v \prec _Q u &\Rightarrow \{u,v\} \in E(G_Q),\\
u \parallel _P v &\Rightarrow\ &u \parallel _Q v &\Rightarrow \{u,v\} \in E(G_Q).
\end{alignat*}

Thus if $\{u,v\} \in E(H)$ then also $\{u,v\} \in E(G_Q)$.

Now suppose that $\{u,v\} \notin E(H)$. Then $u \prec \prec_P v$ or $v \prec \prec_P u$. As $Q$ is a   $\vartriangleleft$-preserving subposet of $P$ it follows that
$u \prec \prec_Q v$ or $v \prec \prec_Q u$, and thus    $\{u,v\} \notin E(G_Q)$.

We conclude that $H$ and $G_Q$ are isomorphic graphs as stated.
\qed
\end{proof}

Now we are ready to prove Theorem~\ref{t:opravena}.

\begin{proof}[(of Theorem~\ref{t:opravena})]
Let $G_{\textrm{forb}}$ be one of the forbidden induced subgraphs for the class  $\cal G$. Let $P \in {\cal P}$ be any poset in the class ${\cal P}$. By the definition of ${\cal P}$, $G_P$ does not contain $G_{\textrm{forb}}$ as an induced subgraph. By Lemma~\ref{l:opravene}, $P$ does not contain any $\prec$-preserving subposet $Q$ such that $G_Q$ is isomorphic to $G_{\textrm{forb}}$. Hence any subposet $Q$ s.t. $G_Q$ is isomorphic to $G_{\textrm{forb}}$ is forbidden for ${\cal P}$. Repeating this for all the forbidden induced subgraphs for ${\cal G}$ we find a list of forbidden $\prec$-preserving subposets $\{ Q_i \}_{i \in I}$.

We will show that the class ${\cal P}$ is characterized by forbidden $\prec$-preserving subposets $\{ Q_i \}_{i \in I}$.

First, let $P \in {\cal P}$. Then $P$ clearly contains no $Q_i$ as a $\prec$-preserving subposet. Otherwise (by Lemma~\ref{l:opravene}) the graph $G_P$ would contain a forbidden induced subgraph for ${\cal G}$.

Conversely, suppose that $P$ contains no $Q_i$ as a $\prec$-preserving subposet. Then (by the construction of $\{ Q_i \}_{i \in I}$) $G_P$ contains no forbidden subgraph for ${\cal G}$. Thus $G_P \in {\cal G}$, and hence $P \in {\cal P}$.
\qed
\end{proof}

The previous theorem can be applied for various graph classes that admit a characterization by forbidden induced subgraphs, such as chordal graphs, claw-free graphs, distance-hereditary graphs, Ptolemaic graphs etc.

\begin{theorem}[corrected Lemma~5.1~\cite{Bres}]
\label{t:opravena claw-free}
Let $P$ be a poset. Then $G_P$ contains an induced claw if and only if $P$ contains one of $S_1$, $S_2$, $S_3$ or $S_2^*$ (the dual of $S_2$) as a $\prec$-preserving  subposet, see Fig. ~\ref{f:posetyS}.
\end{theorem}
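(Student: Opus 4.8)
The plan is to prove both directions of the equivalence, using the corrected framework established by Theorem~\ref{t:opravena} together with the explicit structure of the claw as the forbidden induced subgraph. Since the claw $K_{1,3}$ is a single forbidden induced subgraph, Theorem~\ref{t:opravena} already tells us that the class of posets whose C-I graph is claw-free admits a characterization by forbidden $\prec$-preserving subposets. The remaining task is therefore concrete rather than abstract: I must identify \emph{exactly} which $\prec$-preserving subposets $Q$ satisfy $G_Q \cong K_{1,3}$, and verify that this list is precisely $\{S_1, S_2, S_3, S_2^*\}$.

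First I would prove the ``if'' direction, which is the routine half. Assuming $P$ contains one of $S_1$, $S_2$, $S_3$, or $S_2^*$ as a $\prec$-preserving subposet, I would apply Lemma~\ref{l:opravene}: the C-I graph of that subposet embeds as an induced subgraph of $G_P$. It then suffices to check, by direct inspection of the four posets in Fig.~\ref{f:posetyS}, that each of $G_{S_1}$, $G_{S_2}$, $G_{S_3}$, $G_{S_2^*}$ contains (or equals) an induced claw. By part~(v) of Lemma~\ref{l:basic}, $G_{S_2^*} = G_{S_2}$, so the dual case is immediate once $S_2$ is handled; this also explains why the dual of $S_2$ must appear in the list while the duals of $S_1$ and $S_3$ need not be listed separately (one would check $S_1, S_3$ are self-dual or already produce a claw under duality).

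The harder direction is ``only if'': assuming $G_P$ contains an induced claw on four vertices $\{c, a, b, d\}$ with center $c$, I must show that the subposet of $P$ induced by these four points, made $\prec$-preserving, is isomorphic to one of $S_1, S_2, S_3, S_2^*$. The key analytic step is to translate the adjacency pattern of the claw into poset relations. The center $c$ is adjacent to each of $a, b, d$, while $a, b, d$ are pairwise nonadjacent. By the remark following the C-I graph definition, each nonadjacent pair among $\{a,b,d\}$ satisfies $\prec\prec$ in one direction; by part~(iii) of Lemma~\ref{l:basic}, the independent set $\{a,b,d\}$ lies on a common chain of $P$, so these three points are totally ordered, say $a < b < d$ up to relabeling, with $a \prec\prec b$, $b \prec\prec d$ and $a \prec\prec d$. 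The combinatorial heart of the proof is then a case analysis on how $c$ relates to this chain: $c$ must be adjacent to all three, so for each of $a, b, d$ either $c$ is incomparable to it or $c$ covers it or is covered by it. Enumerating the positions of $c$ relative to the chain $a<b<d$ — subject to transitivity and to the covering constraints forced by $a\prec\prec b \prec\prec d$ — should yield exactly the four configurations $S_1, S_2, S_3, S_2^*$.

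I expect the main obstacle to be ensuring that this case analysis is both \emph{exhaustive} and \emph{non-redundant}, and in particular that I correctly track the $\prec$-preserving requirement rather than mere induced-subposet structure. The subtlety exposed by the counterexample in Section~\ref{s:counterexamples} is precisely that extra points of $P$ lying between two elements can destroy isometry while leaving the $\prec$-preserving property (and hence, via Lemma~\ref{l:opravene}, the induced C-I graph) intact. So when I extract the four points $\{c,a,b,d\}$ and consider the subposet they induce, I must verify it is $\prec$-preserving as a subposet — i.e.\ that no covering relation among these four points in the subposet fails to be a covering in $P$ — or else enlarge my conclusion to allow the minimal $\prec$-preserving witness. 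The careful bookkeeping here, together with checking that each enumerated case genuinely reproduces one of the labelled posets (and that no spurious case produces a claw-yielding poset omitted from the list), is where the real work lies; the abstract machinery of Theorem~\ref{t:opravena} guarantees a finite forbidden list exists, but pinning down that the list is exactly these four requires this explicit verification.
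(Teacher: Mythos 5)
Your skeleton does match the paper's: the ``if'' direction via Lemma~\ref{l:opravene}, and for ``only if'' the use of Lemma~\ref{l:basic}(iii) to place the three leaves on a common chain with $u \prec\prec v \prec\prec w$, followed by a case analysis on how the center relates to that chain. The genuine gap is in how you propose to exhibit the witness. You repeatedly take the candidate witness to be ``the subposet of $P$ induced by these four points, made $\prec$-preserving,'' and treat enlargement only as a fallback. In fact the four claw vertices \emph{never} form a $\prec$-preserving subposet: since $u \prec\prec_P v$, the four-point subposet $Q$ has $u \prec_Q v$ as a covering relation (nothing of $Q$ lies between them) while $u \prec_P v$ fails, violating the definition. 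The posets $S_1, S_2, S_3$ are six-element posets built on a saturated five-chain $u \prec p \prec v \prec q \prec w$ that realizes the two $\prec\prec$ relations, together with the center; the substance of the ``only if'' direction is to choose such intermediate points and to verify how the center relates to \emph{them} (for instance, in the case $x \parallel u, v, w$ one must deduce $x \parallel p$ and $x \parallel q$ by transitivity, since $p < x$ would give $u < x$ and $x < p$ would give $x < v$). This is precisely where the isometric-versus-$\prec$-preserving distinction that motivates the whole paper lives, and your plan leaves it unexecuted.

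Second, the case analysis is underspecified in a way that matters. The paper first rules out $x \prec v$, $v \prec x$, $x \prec u$ and $w \prec x$ (each forces a $\prec\prec$ relation between $x$ and a leaf, deleting an edge of the claw), which collapses the a priori $3^3$ possibilities to: $x \parallel v$ always, and independently $x \parallel u$ or $u \prec x$, and $x \parallel w$ or $x \prec w$. The remaining combination $u \prec x \prec w$ is then split into two further subcases according to the length of the chain realizing $u \prec\prec v \prec\prec w$, and those subcases yield $S_3$ and $S_2$ again --- so the five cases of the paper produce only $S_2$, $S_3$ and $S_2^*$ as witnesses, not ``exactly the four configurations'' your plan predicts ($S_1$ is not needed in this direction). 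These eliminations and the explicit witness constructions are the actual content of the proof; without them the proposal is a correct outline of the paper's strategy but not yet a proof.
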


\begin{proof}
If $P$ contains one of the posets $S_1$, $S_2$, $S_3$ or $S_2^*$ as a $\prec$-preserving  subposet then clearly $G_P$ contains an induced claw.

Conversely, suppose that $G_P$ contains an induced claw. We want to find $S_1$, $S_2$, $S_3$ or $S_2^*$ as a $\prec$-preserving  subposet of $P$.
Let us denote by $x$ the middle vertex and by $u,v,w$ the other vertices of the claw. By Lemma~\ref{l:basic}(iii), as $u,v,w$ form an independent set in $G_P$ they lie on a common chain in $P$. Without loss of generality we may suppose that $u \prec \prec \, v \prec \prec \, w$.

Note that $x \prec \, v$ is not possible, otherwise $x \prec \prec \, w$ and hence $\{ x,w \} \notin E(G_P)$, a contradiction. Similarly, it is not possible that
$x \prec \, u$, $v \prec \, x$ or $w \prec \, x$. Thus there are only five cases to distinguish:

\begin{itemize}
  \item \emph{Case 1} $x \! \parallel  \! u$, $x  \! \parallel  \!  v$, $x  \! \parallel  \!  w$. Then $P$ obviously contains $S_3$ as a $\prec$-preserving  subposet.

\item \emph{Case 2} $u \prec  \, x$, $x  \! \parallel  \!  v$, $x  \! \parallel  \!  w$. Then $P$ obviously contains $S_2$ as a $\prec$-preserving  subposet.

\item \emph{Case 3} $x \prec  \, w$, $x  \! \parallel  \!  v$, $x  \! \parallel  \!  u$. Then $P$ obviously contains $S_2^*$ as a $\prec$-preserving  subposet.

\item \emph{Case 4} $u \prec  \, x$, $x \prec  \, w$, $x  \! \parallel  \!  v$ and the length of the shortest chain in $P$ between $u$ and $w$ is equal to 4.  Then $P$ obviously contains $S_3$ as a $\prec$-preserving  subposet.

\item \emph{Case 5} $u \prec  \, x$, $x \prec  \, w$, $x  \! \parallel  \!  v$ and the length of the shortest chain in $P$ between $u$ and $w$ is greater than 4.  Then $P$ obviously contains $S_2$ as a $\prec$-preserving  subposet.
\end{itemize}
\qed
\end{proof}

Now let us restate the corresponding statements from  \cite{Bres} and
\cite{Bres3}. We skip their proofs as they are the same as the ones presented
in \cite{Bres} and \cite{Bres3}, the only mistake was claiming that the
forbidden subposets must be isometric subposets of $P$.

\begin{theorem}[corrected Lemma 4.4~\cite{Bres}]
Let $P$ be a poset. Then $G_P$ contains an induced house if and only if $P$ contains one of $R_1$, $R_2$, $R_3$, $R_4$, $R_5$ or its duals as a $\prec$-preserving subposet, see Fig.~\ref{f:posetyR}.
\end{theorem}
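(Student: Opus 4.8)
The plan is to prove both directions, mirroring the structure of the corrected claw theorem (Theorem~\ref{t:opravena claw-free}). The forward direction is immediate: if $P$ contains one of $R_1, \dots, R_5$ or one of their duals as a $\prec$-preserving subposet, then by Lemma~\ref{l:opravene} the induced subgraph of $G_P$ on those points is isomorphic to $G_{R_i}$, which by construction is (or contains) an induced house; by Lemma~\ref{l:basic}(v) the dual posets yield the same C-I graph, so the duals cause no new trouble. The substance is the converse.

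For the converse I would start from an induced house in $G_P$ and label its vertices according to the house's structure: let $a,b,c,d,e$ be the five vertices where the four vertices $a,b,c,d$ form the $4$-cycle (the ``square'' of the house) and $e$ is the apex adjacent to two consecutive square-vertices, say $c$ and $d$. The non-edges of the house are exactly the two diagonals of the square and the two edges from the apex to the far square-vertices. Each non-edge $\{p,q\}$ forces, by the remark after the definition of $G_P$, that $p \prec\prec_P q$ or $q \prec\prec_P p$; in particular each such pair is comparable in $P$. First I would use Lemma~\ref{l:basic}(vii) on each independent pair together with its two common neighbors to pin down the relative orientation of the comparabilities, and then combine these local constraints to reconstruct the global order relations among the five points. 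This determines, up to duality and up to the number of ``hidden'' intermediate points on certain chains, which order pattern we are in, and each resulting pattern matches one of $R_1,\dots,R_5$ or a dual.

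The key step, and the main obstacle, is to show that the located subposet is genuinely $\prec$-preserving and not merely a subposet --- this is exactly the subtlety the whole paper corrects. Concretely, after identifying which comparabilities are covers in $P$ and which are strict non-covers ($\prec\prec_P$), I must verify that the same relations hold inside the induced subposet on the five chosen points: a pair that is a cover in $P$ must remain a cover when restricted, and a pair joined by $\prec\prec_P$ must still have some point of the subposet strictly between them (or else we split into a case with a different house-realizing pattern). This is where the casework genuinely branches: whenever a $\prec\prec_P$ pair has no intermediate point already among the five vertices, I would either absorb a witnessing intermediate point into the chain (obtaining a larger $R_i$) or recognize that the forced configuration is itself one of the listed posets, exactly as Cases~4 and~5 of the claw proof handle the shortest-chain-length dichotomy. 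The delicate bookkeeping is ensuring every case lands on one of the five posets or a dual with the cover-relations correctly preserved.

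Because the proof is structurally identical to the one for the claw, differing only in the larger but still finite case analysis dictated by the house's four non-edges, I would present it as the authors indicate: the argument is the same as in \cite{Bres} except that the forbidden subposets are $\prec$-preserving rather than isometric, the correction being justified by Lemma~\ref{l:opravene} in place of the erroneous isometry claim.
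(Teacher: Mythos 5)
Your proposal matches the paper's treatment: the authors give no detailed proof of this theorem, stating only that the argument is identical to the one in \cite{Bres} once ``isometric subposet'' is replaced by ``$\prec$-preserving subposet'' (with Lemma~\ref{l:opravene} supplying the forward direction), which is precisely the structure you describe, including the shortest-chain dichotomy modelled on Cases~4 and~5 of the claw proof. Your converse remains a sketch rather than a completed case analysis (and your appeal to Lemma~\ref{l:basic}(vii) on a diagonal pair of the square with its common neighbours does not literally fit that lemma's hypotheses, since no third house vertex is non-adjacent to both ends of a diagonal), but the paper itself omits exactly this bookkeeping, so there is nothing further to compare.
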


\begin{theorem}[corrected Lemma 4.5~\cite{Bres}]
Let $P$ be a poset. Then $G_P$ contains an induced domino if and only if $P$ contains one of $D_1$, $D_2$, $D_3$, $D_4$, $D_5$, $D_6$, $D_7$ or its duals as a $\prec$-preserving subposet, see Fig.~\ref{f:posetyD}.
\end{theorem}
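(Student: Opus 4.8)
The plan is to mirror the proof of Theorem~\ref{t:opravena claw-free}, replacing the induced claw by the induced domino and the posets $S_i$ by the posets $D_i$ of Fig.~\ref{f:posetyD}. The sufficiency direction is immediate: if $P$ contains some $D_i$ or a dual as a $\prec$-preserving subposet $Q$, then by Lemma~\ref{l:opravene} the graph $G_Q$ is an induced subgraph of $G_P$; since each $D_i$ is constructed so that $G_{D_i}$ is a domino, $G_P$ contains an induced domino, and for the duals one additionally uses $G_{P^*}=G_P$ from Lemma~\ref{l:basic}(v).

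For necessity, I would fix an induced domino in $G_P$ and decode its poset structure. The domino is bipartite, hence triangle-free, so by Lemma~\ref{l:basic}(ii) its six vertices contain no antichain of size three. The two three-element independent sets of the domino partition its vertex set, so by Lemma~\ref{l:basic}(iii) each of them lies on a common chain of $P$; thus the six points split into two $3$-element chains $C_1$ and $C_2$. Within each chain the three domino vertices are pairwise non-adjacent, hence pairwise $\prec\prec$-related, so between any two of them $P$ has at least one intermediate point. Every edge of the domino joins $C_1$ to $C_2$ and, by the definition of $G_P$, is either a cover relation or an incomparability; here one degree-three vertex of each side is adjacent to all of the opposite chain, while each degree-two vertex misses exactly one vertex of it, which already constrains how the chains may interleave.

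From this data the argument becomes a case analysis on (a) the relative order in which $C_1$ and $C_2$ interleave, (b) which cross-edges are realised as covers and which as incomparabilities, using Lemma~\ref{l:basic}(vii) to discard the impossible comparabilities, and (c)---exactly as in Cases~4 and~5 of the claw proof---the lengths of the relevant shortest chains in $P$. For each surviving configuration I would exhibit a subset of $V(P)$ consisting of the six domino vertices together with suitably chosen intermediate points, show that its induced subposet is isomorphic to one of $D_1,\dots,D_7$ or a dual, and verify $\prec$-preservation, i.e. that the cover relations of this subposet coincide with those of $P$.

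The main obstacle is precisely step (c) together with the verification of $\prec$-preservation. The six domino vertices on their own need \emph{not} induce an isometric subposet---this is exactly the gap in \cite{Bres}---so a required cover must be realised by inserting the correct intermediate points, and the distinction by shortest-chain length determines which $D_i$ actually occurs. Obtaining $\prec$-preservation rather than the stronger (and false) isometry is the crux, and closure of the forbidden family under duality is then guaranteed by Lemma~\ref{l:basic}(v).
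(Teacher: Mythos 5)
Your outline follows essentially the same route as the paper, which in fact gives no proof of this statement at all: it explicitly defers to the case analysis of Lemma~4.5 in \cite{Bres}, noting that the only correction needed is to replace ``isometric subposet'' by ``$\prec$-preserving subposet'' in the conclusion --- exactly the point you identify as the crux. Your structural observations (the two bipartition classes are the only $3$-element independent sets of the domino, hence each lies on a chain by Lemma~\ref{l:basic}(iii), with every edge realised across them as a cover or an incomparability) are correct, though like the paper you leave the actual case analysis unexecuted.
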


Let us remark that for $P_1$, $P_2$, and $P_3$ the notion of isometric subposet and $\prec$-preserving subposet coincide. More precisely, a poset $P$ contains $P_1$, $P_2$, or $P_3$ as an isometric subposet if and only if $P$ contains $P_1$, $P_2$, or $P_3$ as a $\prec$-preserving subposet. This is because the length of the longest chain in $P_1$, $P_2$, and $P_3$ is only two. Hence, Theorem 3.1 \cite{Bres3} holds as it was stated in \cite{Bres3}.

\begin{theorem}[corrected Theorem 4.1~\cite{Bres3}]
Let $P$ be a poset. Then $G_P$ is a cograph if and only if $P$ contains none of $Q_1$, $Q_2$, \dots, $Q_7$ and neither of the duals of $Q_2$ and $Q_5$ as a $\prec$-preserving subposet, see Fig.~\ref{f:posetyQ}.
\end{theorem}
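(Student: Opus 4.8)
The plan is to reduce the statement to a finite enumeration of four-element posets via the already-corrected machinery, and then to carry out that enumeration by hand. Recall first the classical fact that a graph is a cograph if and only if it contains no induced $P_4$ (the path on four vertices); thus cographs form a class with a single forbidden induced subgraph. Consequently Theorem~\ref{t:opravena} applies directly: the class $\mathcal P = \{P \mid G_P \text{ is a cograph}\}$ is characterized by forbidden $\prec$-preserving subposets, and the proof of Theorem~\ref{t:opravena} tells us exactly what these subposets are — they are precisely the posets $Q$ (up to isomorphism) for which $G_Q \cong P_4$. So the whole problem collapses to the task: \emph{determine all posets $Q$ whose C-I graph is isomorphic to $P_4$}, and verify that this list is exactly $Q_1,\dots,Q_7,Q_2^*,Q_5^*$.

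Since $G_Q$ has vertex set $V_Q$, any such $Q$ has exactly four elements, so this is a finite search. I would set it up as follows. Write $P_4$ as $a-b-c-d$; its three non-edges are $\{a,c\},\{a,d\},\{b,d\}$. By the characterization of non-edges of a C-I graph, these three pairs must be exactly the $\prec\prec$-pairs of $Q$ (comparable, non-covering), while the three edges $\{a,b\},\{b,c\},\{c,d\}$ must each be either a cover relation or an incomparability. Lemma~\ref{l:basic}(iii) already forces a lot of structure: each independent pair of $P_4$ lies on a common chain of $Q$, so $a,c$, $a,d$ and $b,d$ are pairwise comparable, and each of these comparabilities must admit an interior point witnessing that it is not a cover. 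The prototypical solution is the four-element chain $a<b<c<d$, whose C-I graph is visibly $P_4$; the remaining solutions arise by turning some of the three ``edge'' pairs into incomparabilities while keeping the three prescribed $\prec\prec$-pairs intact and the relation transitive. I would organize the case analysis by the directions of the comparabilities $a\!-\!c$, $a\!-\!d$, $b\!-\!d$ and by which of $\{a,b\},\{b,c\},\{c,d\}$ are comparable, discarding every choice that violates transitivity, antisymmetry, or the requirement that the three prescribed pairs be genuine gaps. Lemma~\ref{l:basic}(v) lets me halve the work: since $G_{Q^*}=G_Q$, the solutions occur in dual pairs, which is exactly why the duals of $Q_2$ and $Q_5$ must be included (while the remaining $Q_i$ turn out to be self-dual or to have duals already among $Q_1,\dots,Q_7$).

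Once the enumeration confirms that the nine posets $Q_1,\dots,Q_7,Q_2^*,Q_5^*$ are precisely the four-element posets with C-I graph $P_4$, the theorem follows immediately from Theorem~\ref{t:opravena}: $G_P$ is a cograph iff $G_P$ contains no induced $P_4$ iff $P$ contains none of these nine posets as a $\prec$-preserving subposet. The main obstacle is the completeness of the case analysis in the middle step — it is easy to miss a poset or to double-count a dual — so I would cross-check by verifying each listed $Q_i$ directly (computing its cover and incomparability relations and confirming $G_{Q_i}\cong P_4$) and by confirming that no further poset survives the transitivity constraints. I would also stress the point that motivates this whole paper: the witnesses must be taken as $\prec$-preserving subposets and \emph{not} as isometric ones (cf.\ the counterexample in Fig.~\ref{f:protipriklad4}), since it is Lemma~\ref{l:opravene}, rather than isometry, that guarantees that a $\prec$-preserving copy of some $Q_i$ inside $P$ induces a $P_4$ inside $G_P$.
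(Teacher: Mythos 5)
Your overall strategy is sensible and, in spirit, it is how one should think about this theorem; but as written it has a genuine gap, and it also differs structurally from what the paper does (the paper does not reprove the statement at all --- it explicitly defers to the case analysis of Theorem~4.1 in \cite{Bres3}, noting that the only repair needed is to replace ``isometric'' by ``$\prec$-preserving'' in the conclusion of each case). The gap is that the entire mathematical content of your argument --- the complete enumeration of all four-element posets $Q$ with $G_Q\cong P_4$, together with the verification that this list is exactly $Q_1,\dots,Q_7,Q_2^*,Q_5^*$ --- is only announced (``I would set it up\dots'', ``once the enumeration confirms\dots'') and never carried out. Everything else in your write-up is routine; this step is the proof, and omitting it leaves nothing verified.

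There is also a subtler point you pass over when you say the theorem ``follows immediately from Theorem~\ref{t:opravena}.'' The statement of Theorem~\ref{t:opravena} only asserts that \emph{some} forbidden-subposet characterization exists; to pin down the list you must use the construction inside its proof, and the load-bearing claim there is the converse direction: if $G_P$ contains an induced $P_4$ on a set $S$, then $P$ contains some poset with C-I graph $P_4$ as a $\prec$-preserving subposet. Note that the natural witness --- the subposet of $P$ induced on $S$ itself --- need \emph{not} work: a pair $u\prec\prec_P v$ in $S$ (a non-edge of the $P_4$) can become a cover in the induced subposet when the interior witness lies outside $S$, so that subposet fails to be $\prec$-preserving and its C-I graph acquires extra edges. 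One must then argue that a different four-element subset of $V_P$ serves as a witness; this is exactly the kind of adjustment performed in Cases~4 and~5 of the proof of Theorem~\ref{t:opravena claw-free}, where the choice of forbidden subposet depends on the length of a shortest chain in $P$. Your proposal inherits this obligation silently by citing Theorem~\ref{t:opravena}, whereas the proof the paper points to discharges it by explicit case analysis on the relations among the four vertices of the induced $P_4$ in $P$. To make your argument self-contained you would need either to carry out that case analysis or to spell out why the construction in the proof of Theorem~\ref{t:opravena} really produces a $\prec$-preserving witness; as it stands, the ``immediate'' step conceals precisely the difficulty that motivated this paper.
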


\begin{figure}
\begin{center}
\includegraphics[width=0.8\textwidth]{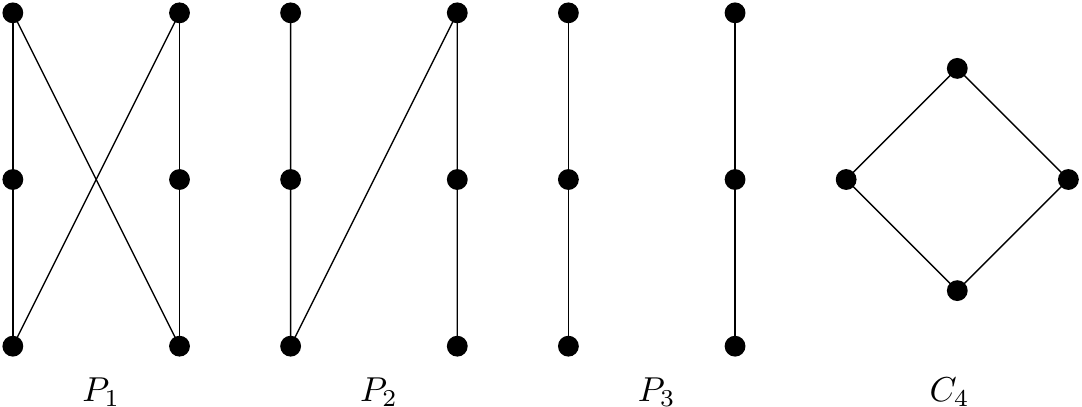}
\caption{Subposets $P_1$, $P_2$, $P_3$ and $C_4$} \label{f:3}
\label{f:posetyP}
\end{center}
\end{figure}

\section*{Acknowledgments}
Jan Bok would like to acknowledge the support by the Center of Excellence - ITI (P202/12/G061 of GA\v{C}R). Jan Bok was also
partly supported by the project GAUK 1158216 and GAUK 1334217.

The authors would like to thank Bo\v{s}tjan Bre\v{s}ar, Sandi Klav\v{z}ar, and
the editor of this paper for many valuable comments and suggestions regarding this paper.

\end{document}